\newtheorem{thm}{Theorem}[section]
\newtheorem{lem}[thm]{Lemma}
\theoremstyle{definition}
\theoremstyle{remark}
\newcommand{\dif}{ \,\mathrm{d} }
\let\c@equation\c@thm
\numberwithin{equation}{section}
\title{Limiting value of higher Mahler measure}
\author[A. Biswas]{Arunabha Biswas}
\address{DEPARTMENT OF MATHEMATICS AND STATISTICS, TEXAS TECH UNIVERSITY, LUBBOCK, TX 79409, USA}
\email{arunabha.biswas@ttu.edu}
\author[C. Monico]{Chris Monico}
\address{DEPARTMENT OF MATHEMATICS AND STATISTICS, TEXAS TECH UNIVERSITY, LUBBOCK, TX 79409, USA}
\email{c.monico@ttu.edu}
\begin{document}

\begin{abstract}

We consider the $k$-higher Mahler measure $m_k(P)$ of a 
Laurent polynomial $P$ as the integral of $\log ^k \left| P \right|$ over 
the complex unit circle. In this paper we derive an explicit formula
for the value of $\left| m_k(P) \right|/k!$ as $k \to \infty.$

\end{abstract}

\subjclass[2010]{11R06; 11M99.}
\keywords{Mahler measure, higher Mahler measure.}


\maketitle




\section{Introduction}

For a non-zero Laurent polynomial $P(z) \in \mathbb{C}[z, z^{-1}],$ 
the $k$-higher Mahler measure of $P$ is defined \cite{R2} as
\[
  m_k(P) = \int _{0}^{1} \log^k \left|P\left(e^{2\pi it}\right)\right|\dif{t}.
\]
For $k=1$ this coincides with the classical (log) Mahler measure defined as
\[
  m(P) = \log|a| + \sum_{j=1}^{n} \log \left(\max \{1,|r_j|\} \right), 
          \,\,\,\mbox{for} \,\,\, P(z)=a \prod_{j=1}^{n} (z-r_j),
\] 
since by Jensen's formula  $m(P)=m_1(P)$ \cite{book}.

Though classical Mahler measure was studied extensively, higher 
Mahler measure was introduced and studied very recently by  
Kurokawa, Lalín and Ochiai \cite{R2}  and Akatsuka \cite{R1}. 
It is very difficult to evaluate $k$-higher Mahler measure for 
polynomials except few specific examples shown in \cite{R1} and 
\cite{R2} , but it is relatively easy to find their limiting values.

In \cite{sinha} Lalin and Sinha answered Lehmer's question 
\cite{book} for higher Mahler measure by finding 
 non-trivial lower bounds for $m_k$ on $\mathbb{Z}[z]$ for $k \geq 2.$

In \cite{B} it has been shown using Akatsuka's zeta function of \cite{R2} that for $|r|=1$,
$|m_k(z+r)|/k! \to 1/\pi$ as 
$k \to \infty.$
In this paper we generalize this result by computing the same limit
for an arbitrary Laurent polynomial 
$P(z) \in \mathbb{C}[z, z^{-1}]$ using a different technique.

\begin{thm} \label{thm:main}
Let $P(z) \in \mathbb{C}\left[z, z^{-1}\right]$ be a Laurent polynomial,
possibly with repeated roots.
Let $z_1,\dots, z_n$ be the distinct roots of $P$. Then 
\[ 
  \lim_{k \to \infty} \frac{\left| m_k(P) \right|}{k!}
         = \frac{1}{\pi} \, \sum_{z_j \in S^1} \frac{1}{\left| P'(z_j) \right|},
\]
where $S^1$ is the complex unit circle $|z|=1$, and the right-hand side is
taken as $\infty$ if $P'(z_j)=0$ for some $z_j\in S^1$, i.e., if $P$ has a 
repeated root on $S^1$.
\end{thm}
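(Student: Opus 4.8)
The plan is to localize the integral $m_k(P) = \int_0^1 \log^k|P(e^{2\pi it})|\,dt$ around the zeros of $P$ lying on $S^1$, since these are the only points at which the integrand can become unbounded as $k$ grows. Away from such zeros, $|P|$ is bounded below by some $\delta>0$ and above by some $\Delta<\infty$, so $\bigl|\log|P|\bigr|\le M:=\max(|\log\delta|,|\log\Delta|)$ there, and the corresponding part of the integral is at most $M^k=o(k!)$. Thus only short arcs around each $z_j\in S^1$ contribute to the leading order. The point I would stress is that on each such arc $\log|P|\to-\infty$, so in the dominant region the integrand $\log^k|P|$ carries the single sign $(-1)^k$; this rules out cancellation among the arcs and lets me add their contributions directly.

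Next I would carry out the local analysis at a simple zero $z_j=e^{2\pi i\theta_j}$. Writing $t=\theta_j+s$ and using $|e^{2\pi is}-1|=2\pi|s|\bigl(1+O(s^2)\bigr)$ together with the Taylor expansion $P(z)=P'(z_j)(z-z_j)\bigl(1+O(z-z_j)\bigr)$, one obtains
\[
  \log\bigl|P(e^{2\pi i(\theta_j+s)})\bigr| = \log\bigl(2\pi|P'(z_j)|\,|s|\bigr) + r(s), \qquad r(s)=O(|s|),
\]
for $|s|$ small. The leading term is handled by the substitution $v=2\pi|P'(z_j)|\,|s|$ followed by $v=e^{-u}$, which reduces everything to incomplete Gamma integrals; the elementary fact that $\int_0^c(\log v)^k\,dv\sim(-1)^k k!$ for any fixed $c>0$ — the mass of $u^k e^{-u}$ concentrating at $u=k\to\infty$ swamps the fixed cutoff — then gives, after accounting for both signs of $s$,
\[
  \int_{|s|<\epsilon}\log^k\bigl|P(e^{2\pi i(\theta_j+s)})\bigr|\,ds \;\sim\; \frac{(-1)^k k!}{\pi|P'(z_j)|}.
\]
Summing over the $z_j\in S^1$ and adding the $o(k!)$ far-field term yields $m_k(P)=(-1)^k k!\,\tfrac1\pi\sum_{z_j\in S^1}|P'(z_j)|^{-1}+o(k!)$, which is Theorem \ref{thm:main}.

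The main obstacle is showing that the error $r(s)$ does not perturb this asymptotic, that is,
\[
  \int_{|s|<\epsilon}\bigl[\log(2\pi|P'(z_j)|\,|s|)+r(s)\bigr]^k\,ds - \int_{|s|<\epsilon}\log^k\bigl(2\pi|P'(z_j)|\,|s|\bigr)\,ds = o(k!),
\]
even though $r$ is added to a large base and the sum is raised to a large power. After the substitution $v=e^{-u}$ the difference becomes $(-1)^k\int_{u_0}^\infty\bigl[(u-\tilde r(u))^k-u^k\bigr]e^{-u}\,du$ with $|\tilde r(u)|\le Ke^{-u}$, and the mass sits near $u=k$, where $\tilde r(u)=O(e^{-k})$ is exponentially negligible against $u\approx k$. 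I would make this precise by splitting the range at $u=\log k$: on $[\log k,\infty)$ the estimate $\bigl|(1-\tilde r/u)^k-1\bigr|\le Ck|\tilde r|/u$ bounds the error by a multiple of $\int u^k e^{-2u}\,du=2^{-(k+1)}k!=o(k!)$, while on $[u_0,\log k]$ both integrands are $O\bigl((\log k)^k\bigr)=o(k!)$.

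Finally, the repeated-root case is a short corollary of the same localization. If some $z_j\in S^1$ has multiplicity $m\ge 2$, then $\log|P|=m\log(2\pi|s|)+O(1)$ near it, and the substitution above produces a local contribution $\sim(-1)^k m^k k!$. Since every arc contribution shares the sign $(-1)^k$, there is again no cancellation, so $|m_k(P)|/k!\ge c\,m^k\to\infty$, in agreement with the $\infty$ convention in the statement.
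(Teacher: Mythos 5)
Your proposal is correct, and although its skeleton --- localize to small arcs around the zeros of $P$ on $S^1$, note that on those arcs the integrand carries the single sign $(-1)^k$ so the arc contributions add without cancellation, bound the far field by $M^k=o(k!)$, and reduce each arc to a one-variable model integral --- is the same as the paper's, the technical heart is genuinely different. The paper never confronts the additive error $r(s)$ that you single out as the main obstacle: in place of the expansion $\log|P|=\log\left(2\pi|P'(z_j)|\,|s|\right)+O(|s|)$, it proves a \emph{multiplicative} sandwich (Lemma \ref{lem:lapprx}), $2\pi(1-\varepsilon)|t-t_0|\,|P'(z_0)|\le|P(e^{2\pi it})|\le 2\pi(1+\varepsilon)|t-t_0|\,|P'(z_0)|$, observes that once all quantities lie below $1$ this ordering is preserved by $x\mapsto\left|\log^k x\right|$, evaluates the two model limits exactly as $2/|c|$ with $c=2\pi(1\mp\varepsilon)P'(z_0)$ (Lemma \ref{lem3}, via an integration-by-parts recursion whose limit is the series for $e^{-\log|c\varepsilon|}$), and squeezes, letting $\varepsilon\to0$ only at the very end. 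That device entirely avoids estimating $(u-\tilde r(u))^k-u^k$; you push that estimate through instead, by a Laplace-type concentration argument (splitting at $u=\log k$), which is correct as sketched and in fact buys more than the theorem asks for: you obtain the genuine asymptotic $m_k(P)=(-1)^k k!\left(\frac{1}{\pi}\sum_{z_j\in S^1}|P'(z_j)|^{-1}+o(1)\right)$, hence also the sign of $m_k(P)$ for large $k$, not merely the limit of $|m_k(P)|/k!$. The price is that your route has the delicate step the paper's squeeze was designed to avoid, and it is only sketched.

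Two points to tighten. First, fix $\epsilon$ small enough that $|P|<1$ on each closed arc, so that the sign-coherence claim and the positivity of $u-\tilde r(u)$ hold on the entire range of integration after the substitution. Second, in the repeated-root case your assertion that the local contribution is $\sim(-1)^k m^k k!$ claims more than an $O(1)$ error in $\log|P|$ can justify: an additive $O(1)$ perturbs the $k$-th power by a bounded multiplicative factor, not by $1+o(1)$. The one-sided bound $|m_k(P)|/k!\ge c\,m^k\to\infty$ that you actually use does follow, and is all you need; note that the paper's treatment of this case is softer still, using only $f'(t_1)=0$ to get $|f(t)|\le\varepsilon|t-t_1|$ near $t_1$, whence by its Lemma \ref{lem3} the limit is at least $2/\varepsilon$ for every $\varepsilon\in(0,1)$.
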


\section{Proof of the theorem}
  We first prove several lemmas which essentially show that the integrand
may be linearly approximated near the roots of $P$ on $S^1$.

\begin{lem} \label{lem1}
  Let $P(z) \in \mathbb{C}\left[z, z^{-1}\right] $ be a 
  Laurent polynomial and $A \subseteq [0,1]$ be a closed 
  set such that $P\left(e^{2 \pi i t}\right) \neq 0$ for 
  all $t \in A.$ Then 
  \[
    \lim_{k\to\infty} \frac{1}{k!} \int_A \log^k \left| P\left(e^{2 \pi i t}\right) \right| \d{t}=0
  \]
\end{lem}

\begin{proof}
  Since $A$ is closed, due to the periodicity of $e^{2\pi it}$ 
  and continuity of $P(e^{2\pi it})$ there exist constants $b$ and 
  $B$ such that $0 < b \leq \left| P\left(e^{2\pi it}\right) \right| \leq B$ on $A$. 
  Then for each positive integer $k$, 
  $(\log^k \left| P\left(e^{2\pi it}\right) \right|)/k!$ 
  is bounded between $(\log ^k b)/k!$ and $(\log ^k B)/k!$, and therefore 
  $(1/k!)\int _A\log^k \left| P\left(e^{2\pi it}\right) \right| \d{t}$ 
  is bounded between $(\mu A \, \log ^k b)/k!$ and 
  $(\mu A \, \log ^k B)/k!,$ where $\mu A$ is the Lebesgue measure of 
  $A$. The result follows by letting $k$ tend to infinity.
\end{proof}

\begin{lem} \label{lem:lapprx}
  Let $P(z) \in \mathbb{C}\left[z, z^{-1}\right] $ be a Laurent polynomial
  with a root of order one at $z_0=e^{2\pi it_0},$ and $P'(z)$ be its 
  derivative with respect to $z$. 
  Then for each $\varepsilon\in(0,1)$ there exists $\delta >0$ such that $|t-t_0| < \delta$ implies
  \[
    \left| 2 \pi (1-\varepsilon)(t-t_0)  P' \left( e^{2 \pi i t_0} \right) \right| 
    \leq \left| P\left( e^{2 \pi i t} \right) \right| 
    \leq \left| 2 \pi (1+\varepsilon)(t-t_0)  P' \left( e^{2 \pi i t_0} \right) \right|.
  \]
 \end{lem}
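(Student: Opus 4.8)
The plan is to reduce the statement to a single one-variable differentiability computation. Set $g(t) = P\!\left(e^{2\pi i t}\right)$, regarded as a function of the real variable $t$. Since $z_0 = e^{2\pi i t_0}$ is a root of $P$ of order one, we have $g(t_0) = P(z_0) = 0$ and $P'(z_0) \neq 0$. Because $z_0 \neq 0$, $P$ is holomorphic in a neighborhood of $z_0$, so $g$ is differentiable at $t_0$ as a composition of $P$ with the smooth map $t \mapsto e^{2\pi i t}$. The target inequality, after pulling the positive real constants $(1 \pm \varepsilon)$ outside the modulus, is exactly
\[
  2\pi(1-\varepsilon)\,|t-t_0|\,\bigl|P'(z_0)\bigr|
  \;\leq\; \bigl|g(t)\bigr|
  \;\leq\; 2\pi(1+\varepsilon)\,|t-t_0|\,\bigl|P'(z_0)\bigr|,
\]
so it suffices to control the modulus of the difference quotient $g(t)/(t-t_0)$ near $t_0$.

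First I would compute $g'(t_0)$ by the chain rule. Since $\tfrac{d}{dt}e^{2\pi i t} = 2\pi i\, e^{2\pi i t}$, we get $g'(t) = 2\pi i\, e^{2\pi i t}\,P'\!\left(e^{2\pi i t}\right)$, hence $g'(t_0) = 2\pi i\, z_0\, P'(z_0)$. Taking moduli and using $|z_0| = |i| = 1$ yields $|g'(t_0)| = 2\pi\,|P'(z_0)|$, which is nonzero precisely because the root has order one. This is the quantity that appears on both sides of the desired bound.

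Next I would invoke the definition of the derivative at $t_0$: since $g(t_0) = 0$, differentiability gives $g(t)/(t-t_0) \to g'(t_0)$ as $t \to t_0$, and therefore $\bigl|g(t)/(t-t_0)\bigr| \to |g'(t_0)| = 2\pi\,|P'(z_0)|$. Given $\varepsilon \in (0,1)$, the definition of this limit produces a $\delta > 0$ such that $0 < |t-t_0| < \delta$ forces
\[
  2\pi(1-\varepsilon)\,\bigl|P'(z_0)\bigr|
  \;<\; \left|\frac{g(t)}{t-t_0}\right|
  \;<\; 2\pi(1+\varepsilon)\,\bigl|P'(z_0)\bigr|,
\]
and multiplying through by $|t-t_0|$ gives the claimed two-sided inequality for $t \neq t_0$; at $t = t_0$ all three expressions vanish, so the (non-strict) inequality holds throughout $|t-t_0| < \delta$.

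I do not expect a serious obstacle here: the content is entirely the chain-rule identity $|g'(t_0)| = 2\pi|P'(z_0)|$ together with the order-one hypothesis guaranteeing $P'(z_0)\neq 0$. The only point requiring mild care is the transition from the \emph{complex} limit $g(t)/(t-t_0) \to g'(t_0)$ to the \emph{real} two-sided estimate: one must take moduli \emph{after} establishing the limit (so that the $o(|t-t_0|)$ error is absorbed correctly) rather than trying to bound $|P(e^{2\pi i t})|$ and the linear model separately, and one must treat $t = t_0$ as a degenerate case where both bounds collapse to the true value $|P(z_0)| = 0$.
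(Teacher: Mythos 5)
Your proof is correct and takes essentially the same approach as the paper: both set $f(t)=P\left(e^{2\pi i t}\right)$, use $f(t_0)=0$ together with the limit definition of the derivative at $t_0$, and identify $|f'(t_0)|=2\pi\left|P'(z_0)\right|$ to obtain the two-sided bound. If anything, your chain-rule step is slightly more careful than the paper's, which writes $f'(t_0)=2\pi i\,P'\left(e^{2\pi i t_0}\right)$ and silently drops the unimodular factor $e^{2\pi i t_0}$ that you correctly retain (harmless, since only moduli matter).
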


\begin{proof}
  Set $f(t)=P\left(e^{2 \pi i t}\right).$ 
  Then $f'(t_0)=2 \pi i P'\left(e^{2 \pi i t_0}\right)\ne 0$ and 
  \[
    f'(t_0)=\lim_{t \to t_0} \frac{f(t)-f(t_0)}{t-t_0}.
  \]
  Since $f'(t_0)\ne 0$, it follows that for each $\varepsilon\in(0,1)$ 
  there exists $\delta > 0$ such that $0<|t-t_0|<\delta$ implies 
  \[
    1-\varepsilon < \left| \frac{f(t)-f(t_0)}{(t-t_0)} \cdot \frac{1}{f'(t_0)} \right| < 1+\varepsilon,
  \]
  which proves the lemma since $f(t_0)=P(z_0)=0.$
\end{proof}

\begin{lem} \label{lem3}
  Let $c \neq 0,$ and $t_0 \in \mathbb{R}.$ Then for all $\varepsilon > 0,$
  \[
    \lim_{k \to \infty} \frac{1}{k!} \left|\,\,
        \int_{t_0 - \varepsilon}^{t_0 + \varepsilon} \log^k|c(t-t_0)| \d{t} \right|
     = \frac{2}{|c|}.
  \] 
\end{lem}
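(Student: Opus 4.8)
The plan is to reduce the integral to a standard form by two changes of variable and then evaluate it via a reduction formula. First I would substitute $u = t - t_0$; since $\log^k|c(t-t_0)| = \log^k(|c|\,|u|)$ is even in $u$, the integral over $[t_0-\varepsilon,\, t_0+\varepsilon]$ equals $2\int_0^\varepsilon \log^k(|c|u)\,du$. A second substitution $v = |c|u$ then gives
\[
  \int_{t_0-\varepsilon}^{t_0+\varepsilon} \log^k|c(t-t_0)|\,dt = \frac{2}{|c|}\int_0^{a} \log^k v\,dv,
  \qquad a := |c|\varepsilon > 0.
\]
Because $1/k!$ and $2/|c|$ are positive, it therefore suffices to show that $J_k := \frac{1}{k!}\int_0^a \log^k v\,dv$ satisfies $|J_k| \to 1$ as $k\to\infty$, after which the claimed limit $2/|c|$ follows immediately.

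To evaluate $I_k := \int_0^a \log^k v\,dv$ I would first note that this is improper at the lower endpoint but converges absolutely, since $v\log^k v \to 0$ as $v\to 0^+$. Integration by parts (with the vanishing boundary term at $0$) yields the recursion $I_k = a\log^k a - k\,I_{k-1}$ with $I_0 = a$, whose solution is
\[
  I_k = a\sum_{j=0}^{k} (-1)^{k-j}\,\frac{k!}{j!}\,\log^j a,
\]
a closed form that is checked directly by induction on $k$.

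Dividing by $k!$ and factoring out $(-1)^k$ gives $J_k = a\,(-1)^k\sum_{j=0}^k \frac{(-\log a)^j}{j!}$, so that $|J_k| = a\left|\sum_{j=0}^k \frac{(-\log a)^j}{j!}\right|$. The sum is exactly the $k$-th partial sum of the everywhere-convergent exponential series for $e^{-\log a} = 1/a$, hence $|J_k| \to a\cdot(1/a) = 1$, which completes the argument.

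The main obstacle—really the only delicate point—is that $\varepsilon$, and hence $a$, is arbitrary, so one cannot assume $a \geq 1$; recognizing the alternating sum as a partial exponential series is precisely what makes the limit equal $1$ uniformly for every $a>0$. Equivalently, the substitution $s=-\log v$ turns $I_k$ into $(-1)^k\int_{-\log a}^{\infty} s^k e^{-s}\,ds$, an upper incomplete Gamma integral, and dividing by $k! = \int_0^\infty s^k e^{-s}\,ds$ again gives the limit $1$; but that route forces a separate treatment of the cases $a<1$ and $a\geq 1$, which the reduction formula avoids. Carefully tracking the sign $(-1)^k$ inside the modulus is the other point to watch, but it is absorbed entirely by the absolute value.
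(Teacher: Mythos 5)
Your proof is correct and takes essentially the same route as the paper's: the identical symmetry-plus-substitution reduction to $\frac{2}{|c|}\int_0^{|c|\varepsilon}\log^k v\,\mathrm{d}v$, the same integration-by-parts closed form for $\int_0^a \log^k v\,\mathrm{d}v$ (your version is a reindexing of the paper's), and the same identification of the normalized sum with the partial sums of the exponential series for $e^{-\log a}=1/a$. Your explicit factoring-out of $(-1)^k$ and the remark about small $a$ merely make explicit what the paper absorbs silently into the absolute value.
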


\begin{proof}
  For $k \geq 1$ and $x>0,$ it follows from integration by parts and induction that
  \[
    \int_{0}^{x} \log^k u \d{u} 
      = x \log ^k x +x \sum_{j=1}^{k} \frac{(-1)^j \, k! \, \log^{k-j} \, x}{(k-j)!}.
  \]
  Using the even symmetry of the integrand and substituting $u=|c(t-t_0)|$, we have 
  \[
    \frac{1}{k!} \left|\,\,\int_{t_0 - \varepsilon}^{t_0 + \varepsilon} \log^k|c(t-t_0)| \d{t} \right| 
     = \frac{2}{|c|\,k!} \left|\,\int_{0}^{|c\varepsilon|} \log^k u \,\d{u}  \right|,
  \] 
  and it follows that
  \begin{eqnarray*}
    \lim_{k \to \infty} \frac{1}{k!} \left|\,\,\int_{t_0 - \varepsilon}^{t_0 + \varepsilon} \log^k|c(t-t_0)| \d{t} \right| 
      &=& \lim_{k \to \infty} \frac{2}{|c|\,k!} \left|\,\int_{0}^{|c\varepsilon|} \log^k u \d{u} \right|\\
      &=& 2 \varepsilon \lim_{k \to \infty} \! \left| \frac{\log^k |c \varepsilon|}{k!} + 
            \sum_{j=1}^{k} \frac{(-1)^j \log^{k-j} |c \varepsilon|}{(k-j)!} \right| \\
      &=& 2 \varepsilon \left| \sum_{n=0}^{\infty} \frac{(-1)^{n} \log^{n} |c \varepsilon|}{n!} \right| \\
      &=& 2 \varepsilon e^{- \log |c \varepsilon|} = 2/|c|.
  \end{eqnarray*}
  \end{proof}

\begin{lem} \label{lem4}
  Let $P(z) \in \mathbb{C}\left[z, z^{-1}\right] $ be a Laurent polynomial 
  with a root of order one at $z_0=e^{2 \pi i t_0}.$ Then for all sufficiently 
  small $\delta > 0,$ 
  \[
    \lim_{k \to \infty} \frac{1}{k!} \left|\,\,
      \int_{t_0-\delta}^{t_0+\delta} \log^k \left| P \left( e^{2 \pi i t} \right) \right| \d{t} \right|
      =\frac{1}{\pi \left| P'\left( e^{2\pi it_0}\right) \right|}.
  \] 
\end{lem}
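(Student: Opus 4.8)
The plan is to sandwich the target integral between two integrals of the linearized integrand, using Lemma~\ref{lem:lapprx} to control $\left|P(e^{2\pi it})\right|$ near the root $z_0$, and then evaluate the bounding integrals via Lemma~\ref{lem3}.

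Here is the approach. Fix $\varepsilon\in(0,1)$ and invoke Lemma~\ref{lem:lapprx} to obtain a $\delta>0$ such that for $|t-t_0|<\delta$ we have the two-sided bound
\[
  \left| 2\pi(1-\varepsilon)(t-t_0)P'(z_0) \right|
    \le \left| P(e^{2\pi it}) \right|
    \le \left| 2\pi(1+\varepsilon)(t-t_0)P'(z_0) \right|.
\]
The subtlety is that $\log$ is increasing but its sign flips across $1$: when $|P(e^{2\pi it})|<1$ the logarithm is negative, so raising to the $k$th power and comparing term by term requires care. To sidestep this, I would shrink $\delta$ (as permitted by the phrase ``for all sufficiently small $\delta$'') so that on the whole interval $|t-t_0|<\delta$ all three quantities above are strictly less than $1$; then every logarithm is negative on this interval. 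Writing $c_{\pm}=2\pi(1\pm\varepsilon)P'(z_0)$, the monotonicity of $\log$ together with the sign considerations gives, for each fixed large $k$, a comparison of $\log^k|P(e^{2\pi it})|$ with $\log^k|c_{-}(t-t_0)|$ and $\log^k|c_{+}(t-t_0)|$; one of these is an upper bound and the other a lower bound on the integrand. Integrating over $(t_0-\delta,t_0+\delta)$ preserves these inequalities, so the target integral is squeezed between $\int\log^k|c_{+}(t-t_0)|\dif t$ and $\int\log^k|c_{-}(t-t_0)|\dif t$.

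Next I would apply Lemma~\ref{lem3} with $c=c_{+}$ and $c=c_{-}$ to compute the limits of the two bounding integrals, obtaining $2/|c_{+}|$ and $2/|c_{-}|$ respectively. Since
\[
  \frac{2}{|c_{\pm}|}
    = \frac{2}{2\pi(1\pm\varepsilon)\left|P'(z_0)\right|}
    = \frac{1}{\pi(1\pm\varepsilon)\left|P'(z_0)\right|},
\]
both bounds converge to $\dfrac{1}{\pi\left|P'(z_0)\right|}$ as $\varepsilon\to 0$. A standard $\varepsilon$--$\delta$ argument then pins the limit of the target integral to exactly $\dfrac{1}{\pi\left|P'(e^{2\pi it_0})\right|}$, giving the claim.

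The main obstacle is the sign-flip issue flagged above: the inequalities on $|P(e^{2\pi it})|$ do not translate directly into inequalities on $\log^k|P|$ unless one controls the sign of the logarithm, and the direction of the inequality can reverse depending on whether the argument of $\log$ exceeds or falls below $1$. Restricting $\delta$ so that all relevant quantities stay below $1$ (where $\log$ is negative and even powers preserve order in a predictable way) is the cleanest fix, and it is exactly why the statement only claims the result for sufficiently small $\delta$. A secondary bookkeeping point is that the limits from Lemma~\ref{lem3} are taken at fixed $\varepsilon$ before letting $\varepsilon\to 0$, so I would phrase the conclusion as a nested limit argument rather than a single passage.
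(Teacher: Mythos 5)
Your proposal is correct and follows essentially the same route as the paper: sandwich $\left|P\left(e^{2\pi it}\right)\right|$ via Lemma~\ref{lem:lapprx}, shrink $\delta$ so all quantities stay below $1$ (making every logarithm negative, which resolves the sign/order-reversal issue exactly as the paper does), and then squeeze using Lemma~\ref{lem3} with $c=2\pi(1\pm\varepsilon)P'(z_0)$ before letting $\varepsilon\to 0$. The paper's proof is just a more explicit write-up of the same argument, spelling out that the three integrands share a sign for each fixed $k$ so that absolute values can be moved inside and outside the integrals.
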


\begin{proof}
  First notice that since $z_0$ has order one, it cannot be a root of $P'(z).$ 
  Now let $\varepsilon \in (0,1).$ By Lemma \ref{lem:lapprx} there is a $\delta > 0$ 
  such that $|t-t_0| < \delta$ implies
  \[
    \left| 2\pi(1-\varepsilon)(t-t_0) P'\left( e^{2\pi it_0}\right)\right| 
    \leq \left|P\left(e^{2\pi it}\right)\right| 
    \leq \left|2\pi(1+\varepsilon)(t-t_0) P'\left(e^{2\pi it_0}\right)\right| 
    \leq 1.
  \]
    Setting $c=2\pi(1-\varepsilon)P'\left(e^{2\pi it_0}\right)$ 
    and $d=2\pi(1+\varepsilon)P'\left(e^{2\pi it_0}\right)$ it follows that 
    for $0< |t-t_0|<\delta$,
  \[
    \log |c(t-t_0)| \leq \log\left|P\left(e^{2\pi it}\right)\right| 
    \leq \log|d(t-t_0)| \leq 0,
  \]
  and hence 
  \[
    \left|\log^k|c(t-t_0)|\right| \geq \left|\log^k\left|P\left(e^{2\pi it}\right)\right|\right| 
    \geq \left|\log^k|d(t-t_0)| \right| \geq 0,
  \]
 for all $k \in \mathbb{N}.$ Therefore,
  \[
    \int \limits_{t_0-\delta}^{t_0+\delta} \left|\log^k|c(t-t_0)|\right| \d{t} 
    \geq \! \! \! \int \limits_{t_0-\delta}^{t_0+\delta} \left|\log^k\left|P\left(e^{2\pi it}\right)\right|\right|\d{t} 
    \geq \! \! \! \int \limits_{t_0-\delta}^{t_0+\delta} \left|\log^k|d(t-t_0)| \right| \d{t} \geq 0.
  \]
  But on $(t_0-\delta,t_0+\delta),$ for each fixed $k$, 
  either all three functions $\log^k |c(t-t_0)|$, 
  $\log^k \left|P\left(e^{2\pi it}\right)\right|$ and 
  $\log^k |d(t-t_0)|$ are negative (if $k$ is odd), or positive (if $k$ is even). 
  So the integrals of their absolute values are equal to the 
  absolute values of their integrals and therefore we have 
  \[
    \left|\,\,\int \limits_{t_0-\delta}^{t_0+\delta}  \log^k |c(t-t_0)| \d{t} \right|  
    \geq \left|\,\,\int \limits_{t_0-\delta}^{t_0+\delta} \log^k \left|P\left(e^{2\pi it}\right)\right| \d{t}\right|
    \geq \left|\,\,\int \limits_{t_0-\delta}^{t_0+\delta} \log^k |d(t-t_0)| \d{t} \right|.
  \]
  By Lemma \ref{lem3} it follows that
  \[
    \frac{2}{|c|} 
    \geq \lim_{k \to \infty} \frac{1}{k!} \left|\,\,
          \int_{t_0-\delta}^{t_0+\delta} \log^k \left|P\left(e^{2\pi it}\right)\right|\right| 
    \geq \frac{2}{|d|}.
  \]
  Since $c=2 \pi (1-\varepsilon)P' \left( e^{2 \pi i t_0} \right)$ and 
  $d=2 \pi (1+\varepsilon)P' \left( e^{2 \pi i t_0} \right)$ and $\varepsilon > 0$ 
  is arbitrary, we are done.
\end{proof}


  With these lemmas, we now proceed to prove the main theorem.

\begin{proof}[Proof of Theorem \ref{thm:main}]
  First notice that 
  \[
    \frac{m_k(P)}{k!} = \frac{1}{k!} \int_{0}^{1} \log^k \left|P\left(e^{2\pi it}\right)\right|\d{t}.
  \]
  If $P(z)$ does not have any roots on $S^1$ then choosing 
  $A=[0,1]$ and applying Lemma \ref{lem1} we see that 
  $ \left| m_k(P) \right|/k! \to 0$ as $k \to \infty$ 
  and the theorem holds in this case.

  Now let $t_1,\dots, t_m\in[0,1]$ such that 
  $e^{2\pi it_1},\dots, e^{2\pi it_m}$ are the distinct roots of $P$ on $S^1$.
  Let $\delta>0$ be sufficiently small so that $|P(e^{2\pi it_j})|<1$ on
  each interval $(t_j-\delta, t_j+\delta)$, $j=1,\dots, m$, and these intervals are disjoint and define
  \[
    A=[0,1] \smallsetminus \bigcup _{j=1}^{m} (t_j -\delta, t_j + \delta).
  \]
  Using Lemma \ref{lem1}, and the fact that $\log|P(e^{2\pi it})|<0$ on $[0,1]\setminus A$,
  we find that
  \begin{eqnarray}
    \lim_{k\to\infty}\frac{|m_k(P)|}{k!} 
    &=& \lim_{k\to\infty} \frac{1}{k!} \left| \int_A \log^k|P(e^{2\pi it})|\d{t}
                                            + \int_{[0,1]\setminus A} \log^k|P(e^{2\pi it})|\d{t} \right| \nonumber \\
    &=& \lim_{k\to\infty} \sum_{j=1}^m \frac{1}{k!} 
           \left| \int_{t_j-\delta}^{t_j+\delta} \log^k|P(e^{2\pi it})|\d{t} \right| \label{eqn:decomp}
  \end{eqnarray}
  If $P$ has no repeated roots on $S^1$, by Lemma \ref{lem4}, this final sum is equal 
  $\pi^{-1}\sum_{j=1}^m |P'(e^{2\pi it_j})|^{-1}$,
  and so the theorem is proven in this case.

  Finally, if $P$ has a repeated root on $S^1$, we may assume without loss of generality that
  $P(z_1)=P'(z_1)=0$ where $z_1=e^{2 \pi i t_1}$. With $f(t)=P(e^{2\pi it})$, we have that $f(t_1)=f'(t_1)=0$.
  Then for each $\varepsilon\in(0,1)$ there is a $\delta_\varepsilon \in (0,1)$ such that
  \[
    \left| \frac{f(t)}{t-t_1}\right| = \left| \frac{f(t)-f(t_1)}{t-t_1}\right| 
      \le \varepsilon, \hspace{12pt}\mbox{ for all } 0<|t-t_1|<\delta_\varepsilon.
  \]
  It follows that $\log|f(t)|\le \log|\varepsilon(t-t_1)|<0$ for all $0<|t-t_1|<\delta_\varepsilon$,
  and so
  \[
    \left| \log^k|f(t)|\right| \ge \left| \log^k|\varepsilon(t-t_1)|\right|, 
      \hspace{12pt}\mbox{ for all } 0<|t-t_1|<\delta_\varepsilon.
  \]
  We may assume that $\delta_\varepsilon < \delta$, and using \eqref{eqn:decomp} and Lemma \ref{lem3}
  deduce that
  \begin{eqnarray*}
    \lim_{k\to\infty}\frac{|m_k(P)|}{k!}
      &\ge & \lim_{k\to\infty} \left| \int_{t_1-\delta}^{t_1+\delta} \log^k|P(e^{2\pi it})|\d{t} \right| \\
      & =  & \lim_{k\to\infty} \int_{t_1-\delta}^{t_1+\delta} \left| \log^k|P(e^{2\pi it})| \right| \d{t} \\
      &\ge & \lim_{k\to\infty} \int_{t_1-\delta_\varepsilon}^{t_1+\delta_\varepsilon} 
                     \left| \log^k|\varepsilon(t-t_0)| \right| \d{t} \\
      &=& \frac{2}{|\varepsilon|}.
  \end{eqnarray*}
  Since $\varepsilon\in(0,1)$ was arbitrary, the limit in question diverges to $\infty$
  and the theorem is proven.
\end{proof}



\bibliographystyle{elsarticle-num}  

\end{document}